\newtheorem*{theorem*}{Theorem}
\newtheorem{lemma}{Lemma}
\newtheorem*{corollary*}{Corollary}
\newtheorem*{proposition*}{Proposition}
\newtheorem*{remark*}{Remark}
\newtheorem*{definition*}{Definition}
\newcommand{\rB}{\mathsf{B}}
\newcommand{\rC}{\mathsf{C}}
\newcommand{\rF}{\mathsf{F}}
\newcommand{\rG}{\mathsf{G}}
\DeclareMathOperator{\Sp}{Sp}
\DeclareMathOperator{\Spin}{Spin}
\DeclareMathOperator{\SCliff}{SCliff}
\DeclareMathOperator{\End}{End}
\tikzset{dynkin-node/.style={draw,circle,inner sep=2}}
\title{Suzuki---Ree groups and\\Tits mixed groups over rings}
\author{Andrei Smolensky\thanks{\,\ email: \texttt{andrei.smolensky@gmail.com}\newline Department of Mathematics and Mechanics, Saint Petersburg State University\newline The research was supported by RSF (project No. 17-11-01261)}}
\begin{document}
\maketitle
\begin{abstract}
It is shown that Suzuki---Ree groups can be easily defined by means of comparing two fundamental representations of the ambient Chevalley group in characteristic $2$ or $3$. This eliminates the distinction between the Suzuki---Ree groups over perfect and imperfect fields and gives a natural definition for the analogues of such groups over commutative rings. As an application of the same idea, we explicitly construct a pair of polynomial maps between the groups of types $\rB_n$ and $\rC_n$ in characteristic $2$ that compose to the Frobenius endomorphism. This, in turn, provides a simple definition for the Tits mixed groups over rings.\end{abstract}

The classical definition of Suzuki---Ree groups over a perfect field \cite{TitsSuzukiRee} uses the existence of the exceptional root length changing automorphism of the Chevalley group of type $\rC_2$, $\rG_2$ or $\rF_4$ and the endomorphism of the base field that squares to Frobenius. Over a ring, the former is only defined for the elementary subgroup, while the latter need not be invertible. Below we give a new definition for Suzuki---Ree groups that incorporates the case of imperfect fields \cite[II.10]{TitsOvoides} and works for commutative rings.

Let $\Phi$ be a root system of type $\rC_2$, $\rG_2$ or $\rF_4$, and let $\sigma$ be a mapping induced by the symmetry of its Dynkin diagram. Consider the fundamental weights $\lambda$ and $\mu$ corresponding to the terminal nodes of the Dynkin diagram, the latter are permuted by $\sigma$. Assume that $\lambda$ is the highest weights of the minimal representation of $G(\Phi)$, then $\mu$ is the highest weight of the adjoint representation in case $\Phi=\rG_2, \rF_4$ and of the unique short-roots representation in case $\Phi=\rC_2$ (\cref{table:representations}). It so happens that in characteristic $2$ (for $\rC_2$ and $\rF_4$) or $3$ (for $\rG_2$) the dimension of the irreducible highest weight representation $V(\mu)$ drops and becomes equal $\dim V(\lambda)$.
\begin{table}[h]
\centering
\begin{tabular}{cccllll}
\toprule
$\Phi$ & $p$ & roots numbering & $\lambda$ & $\mu$ & $V(\lambda)_\mathbb{Z}$ & $V(\mu)_\mathbb{Z}$ \\
\midrule
$\rC_2$ & $2$ &
\includegraphics{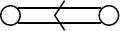}
& $\varpi_1$ & $\varpi_2$ & minimal, $4$-dim & short-roots, $5$-dim \\
$\rG_2$ & $3$ &
\includegraphics{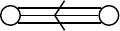}
& $\varpi_1$ & $\varpi_2$ & minimal, $7$-dim & adjoint, $14$-dim \\
$\rF_4$ & $2$ &
\includegraphics{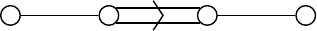}
& $\varpi_4$ & $\varpi_1$ & minimal, $26$-dim & adjoint, $52$-dim \\
\bottomrule
\end{tabular}
\caption{Weights and representations}
\label{table:representations}
\end{table}

In what follows a subscript $p$ will denote the restriction to characteristic $p$. Thus $G_p$ means the affine group scheme over $\mathbb{F}_p$ obtained in a natural way from a $\mathbb{Z}$-scheme $G$, and $V(\lambda)_p$ denotes the $G_p$-module obtained from $G$-module $V(\lambda)_\mathbb{Z}$. The module $V(\lambda)_p$ might not be irreducible, so we will denote by $V(\lambda)$ the irreducible $G_p$-module with the highest weight $\lambda$.

To see that $\dim V(\lambda) = \dim V(\mu) < \dim V(\mu)_p$, let us construct $V(\mu)$ explicitly as a submodule or a quotient of $V(\mu)_p$. This amounts to finding the unique maximal proper $G_p$-invariant submodule of $V(\mu)_p$.

If $\Phi=\rC_2$, the required $G(\rC_2)_2$-invariant submodule of $V(\varpi_2)_2=\langle e_1, e_2, e_0, e_{-2}, e_{-1}\rangle$ is the symplectic subspace $\langle e_1,e_2,e_{-2}, e_{-1}\rangle$ of codimension $1$.

If $\Phi=\rG_2, \rF_4$, the short-roots Chevalley generators $\{e_\alpha \mid \alpha\in\Phi^{<}\}$ together with $\{h_i\mid \alpha_i\in\Pi^{<}\}$ generate an ideal $\mathfrak{s}$ of $\mathfrak{g}(\Phi)_p$, which is the subrepresentation of $V(\mu)_p$. Note that $\dim\mathfrak{s}=\dim V(\mu)_\mathbb{Z}-\dim V(\lambda)_\mathbb{Z}$.

Let now $R$ be a commutative ring of characteristic $p$ and let $\tau$ be a Tits endomorphism on $R$, that is, $\tau^2=\varphi$, the Frobenius endomorphism. $\tau$ induces a mapping on every free $R$-module, in particular, on $V(\lambda)_R$ and $V(\mu)_R$. We identify these two $R$-modules by means of choice of basises (note that the weights of both representations are isomorphic as posets, see~\cref{fig:wd-adjoint}). For an element $g\in G(\Phi, R)$ we will denote by $g_\nu$ the corresponding operator on $V(\nu)_R$.

\begin{figure}[h]
\centering
\includegraphics{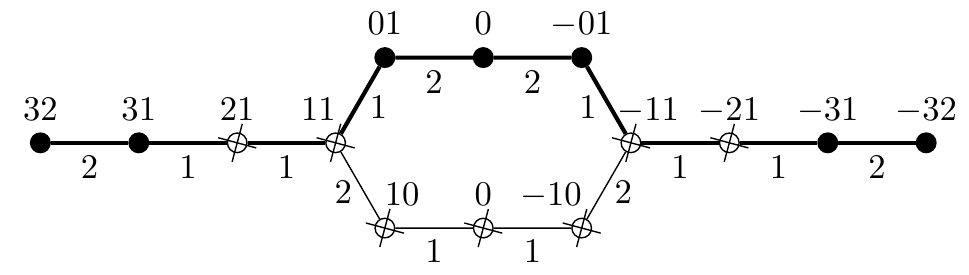}

\vspace{1em}
\includegraphics{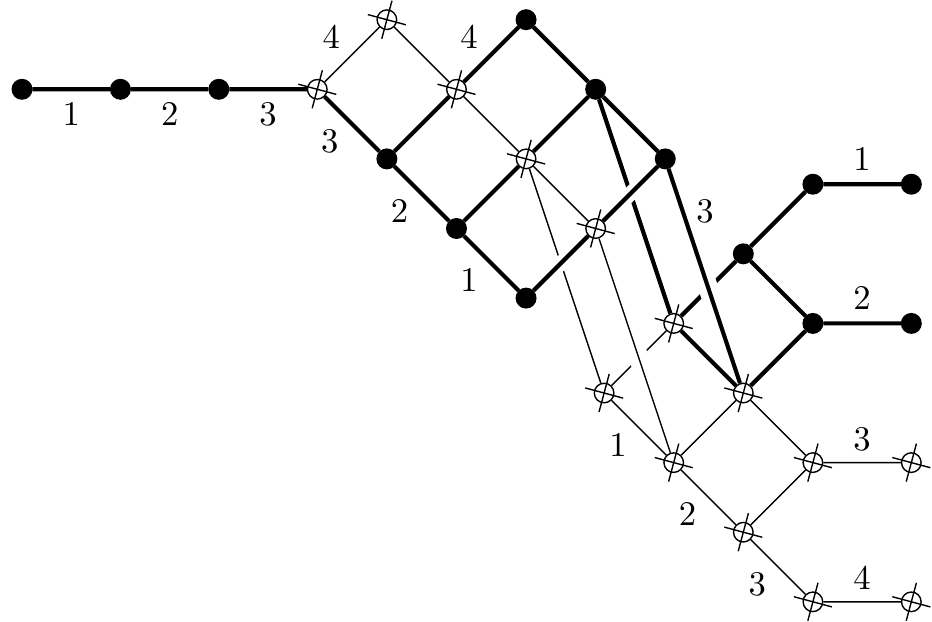}
\caption{Weight diagrams of the adjoint representations of $\rG_2$ and $\rF_4$ (in the latter only the positive part is shown), see~\cite{PloSemVavVBRA} for details. The crossed out vertices correspond to the basis of $\mathfrak{s}$ and are quotiented out. The vertices shown in black and joined by the bold edges survive and form the weight diagram of the minimal representation.}
\label{fig:wd-adjoint}
\end{figure}

We define Suzuki---Ree groups as
\[ {}^\sigma G(\Phi, R, \tau) = \{ g\in G(\Phi, R) \mid \tau g_\lambda = g_\mu \tau \}. \]

This set is obviously closed under multiplication. To show that it is closed under taking inverses consider the equality $\tau=g_\mu^{-1}g_\mu\tau$, rewrite the right-hand side as $g_\mu^{-1}\tau g_\lambda$ and multiply both sides by $g_\lambda^{-1}$ from the right. Define
\begin{align*}
U & = {}^\sigma U(\Phi, R, \tau) = U(\Phi, R)\cap{}^\sigma G(\Phi, R, \tau), \\
H & = {}^\sigma H(\Phi, R, \tau) = H(\Phi, R)\cap{}^\sigma G(\Phi, R, \tau), \\
B & = {}^\sigma B(\Phi, R, \tau) = B(\Phi, R)\cap{}^\sigma G(\Phi, R, \tau) = HU, \\
N & = {}^\sigma N(\Phi, R, \tau) = N(\Phi, R)\cap{}^\sigma G(\Phi, R, \tau).
\end{align*} 

\begin{lemma}\label{lemma:Bruhat}
If $F$ is a field, Suzuki---Ree group admits Bruhat decomposition
\[ {}^\sigma G(\Phi, F, \tau) = UNU. \]
\end{lemma}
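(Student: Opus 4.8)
The plan is to reduce the defining representation-theoretic condition to a single group-theoretic equation and then descend the ordinary Bruhat decomposition of the ambient Chevalley group through it. Over a field one has $G(\Phi,F)=E(\Phi,F)$, so the assignment $x_\alpha(t)\mapsto x_{\sigma\alpha}(t^{c_\alpha})$, with $c_\alpha=1$ on long roots and $c_\alpha=p$ on short roots, extends to a group endomorphism $\pi$ of $G(\Phi,F)$, the special (root-length-changing) isogeny; by the construction of $V(\mu)$ as a submodule or quotient of $V(\mu)_p$ carried out above, $V(\mu)$ is identified with $V(\lambda)$ and $\pi$ is characterised by $\pi(g)_\lambda=g_\mu$. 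Writing $\tau_*$ for the endomorphism $x_\alpha(t)\mapsto x_\alpha(\tau(t))$ and $\tau$ also for the induced $\tau$-semilinear map on $V(\lambda)_F$, one has $\tau\,g_\lambda=(\tau_* g)_\lambda\,\tau$, because the entries of $g_\lambda$ are polynomials with prime-field coefficients in the root parameters. Since $\tau$ fixes each basis vector, its image spans $V(\lambda)_F$ over $F$, so an $F$-linear operator is determined by its values on $\operatorname{im}\tau$; combined with the faithfulness of $V(\lambda)$ this shows that over a field the defining condition $\tau g_\lambda=g_\mu\tau$ is equivalent to the equation $\pi(g)=\tau_*(g)$.

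Both $\pi$ and $\tau_*$ preserve $U$, $H$, $B$ and $N$: the symmetry $\sigma$ maps $\Phi^+$ onto $\Phi^+$, and $\tau$ merely transforms parameters. On the Weyl group $W=N/H$ the map $\pi$ induces $\sigma$ while $\tau_*$ induces the identity. The inclusion $UNU\subseteq {}^\sigma G$ is immediate, since $U$ and $N$ already lie in ${}^\sigma G$ and the latter is a group. For the converse I would take $g\in {}^\sigma G$ and its unique refined Bruhat decomposition $g=u_1\,n_w t\,u_2$ in $G(\Phi,F)$, with $u_1\in U_w=U\cap n_wU^-n_w^{-1}$, $t\in H$, $u_2\in U$, and the cell $w\in W$ uniquely determined. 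Applying the two endomorphisms, $\pi(g)$ lies in the cell $\sigma w$ and $\tau_*(g)$ in the cell $w$; as the cells are disjoint and $\pi(g)=\tau_*(g)$, this forces $\sigma w=w$.

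Once $w$ is $\sigma$-stable, both endomorphisms preserve each of the three factor cells: the identity $\sigma(\Phi^+\cap w\Phi^-)=\Phi^+\cap w\Phi^-$ gives $\pi(U_w),\tau_*(U_w)\subseteq U_w$, while $\pi(n_wt),\tau_*(n_wt)\in n_wH$ and $\pi(u_2),\tau_*(u_2)\in U$. Comparing the unique Bruhat factors on the two sides of $\pi(g)=\tau_*(g)$ then yields the three equalities $\pi(u_1)=\tau_*(u_1)$, $\pi(n_wt)=\tau_*(n_wt)$ and $\pi(u_2)=\tau_*(u_2)$; that is, each factor separately satisfies the defining condition. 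Hence $u_1,u_2\in {}^\sigma U=U$ and $n_wt\in {}^\sigma N=N$, so $g\in UNU$, completing the decomposition.

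The hard part is the first paragraph rather than the descent: one must know that the special isogeny $\pi$ is a genuine endomorphism of the whole group, which is precisely why the statement is restricted to a field, where $G=E$ and the minimal representation is faithful, and one must check that the cancellation of $\tau$ is legitimate even when $\tau$ is not surjective, i.e.\ over imperfect fields. The descent itself is the standard argument that a Frobenius-type endomorphism stabilising a split $BN$-pair restricts to a $BN$-pair on its fixed set; its only genuine inputs are the uniqueness of the refined Bruhat decomposition and the disjointness of the cells, both available for $G(\Phi,F)$.
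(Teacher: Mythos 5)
Your proof is correct, and its second half coincides with the paper's argument: both take the unique refined (restricted) Bruhat decomposition of $g$ in $G(\Phi,F)$ and use uniqueness of the normal form to split the defining condition factor-by-factor, concluding that each factor lies in ${}^\sigma G$. Where you genuinely diverge is in the first half. You convert the semilinear condition $\tau g_\lambda=g_\mu\tau$ into the equalizer equation $\pi(g)=\tau_*(g)$ for two honest group endomorphisms, with $\pi$ the special isogeny (legitimately available over a field, since $G=E$ and $V(\lambda)$ is faithful), and then run the standard Steinberg-type descent: $\pi,\tau_*$ preserve $U$, $H$, $N$, induce $\sigma$ and the identity on $W$, disjointness of cells forces $\sigma w=w$, and uniqueness gives the factor-wise equalities. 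The paper deliberately never constructs $\pi$ --- avoiding the exceptional isogeny is the raison d'\^etre of its definition --- and instead manipulates the semilinear identity at the matrix level: it pushes $\tau$ through the (uni)triangular factors by entrywise application, then cancels $\tau$ by restricting everything to the prime-field submodule $V(\lambda,\mathbb{F}_p)$, on which $\tau$ acts as the identity, before comparing the two restricted Bruhat decompositions of one element. Your observation that $\operatorname{im}\tau$ spans $V(\lambda)_F$ over $F$ plays exactly the role of that prime-field reduction, and each device is what keeps the argument valid over imperfect fields, where $\tau$ is not surjective. Your route buys a clean reduction to the classical fixed-point framework for split $BN$-pairs; the paper's stays entirely inside the two-representation formulation, which is the one that survives the passage to rings.

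One normalization caveat: for $\rG_2$ the paper computes $x_\gamma(\xi)_\mu = x_{\sigma(\gamma)}\bigl((-1)^{1+\operatorname{ht}(\gamma)}\xi^{e(\gamma)}\bigr)_\lambda$, and these signs are nontrivial in characteristic $3$, so your signless formula $x_\alpha(t)\mapsto x_{\sigma\alpha}(t^{c_\alpha})$ does not literally satisfy $\pi(g)_\lambda=g_\mu$ in the paper's basis. Take the characterization $\pi(g)_\lambda=g_\mu$ as the definition of $\pi$ (well-posed by faithfulness and the generator formulas); since $\pi$ then still sends $x_\alpha(t)$ to $x_{\sigma\alpha}(\pm t^{c_\alpha})$, every property you actually use --- preservation of $U$, $H$, $N$ and the induced action $\sigma$ on $W$ --- is unaffected, and your proof goes through as written.
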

\begin{proof}
For an element $g\in {}^\sigma G(\Phi, F, \tau)$ let $g=uhwv$ be its restricted Bruhat decomposition in $G(\Phi, F)$, that is, $u\in U(\Phi, F)$, $h\in H(\Phi, F)$, $w\in N(\Phi,\mathbb{F}_2)$, $v\in U(\Phi, F)\cap {}^w U^-(\Phi, F)$. Then $\tau u_\lambda h_\lambda w_\lambda v_\lambda = u_\mu h_\mu w_\mu v_\mu \tau$. Rewrite this as
\[ \tau u_\lambda h_\lambda w_\lambda = u_\mu h_\mu w_\mu v_\mu \tau v_\lambda^{-1}\]
and note that for any (uni-)triangular matrix $u$ there exists a (uni-)triangular matrix $u'$ such that $\tau u=u'\tau$ (namely, $u'$ is obtained from $u$ by the element-wise application of $\tau$). 
\[ u'h'\tau w_\lambda = u_\mu h_\mu w_\mu v_\mu v' \tau,\quad\text{and hence}\quad \tau w_\lambda = h''u''u_\mu h_\mu w_\mu v_\mu v' \tau. \]
Everything is $\mathbb{F}_p$-linear, the operator in the left-hand side acts naturally on $V(\lambda, \mathbb{F}_p)$, an $\mathbb{F}_p$-submodule of $V(\lambda)$, and so the same applies to the one in the right-hand side. Moreover, in this action $\tau$ can be discarded, for $\tau|_{\mathbb{F}_p}=\operatorname{id}$. Thus both operators $w_\lambda = h''u''u_\mu h_\mu w_\mu v_\mu v'$ are written in terms of their restricted Bruhat decompositions in $G(\Phi, F)$, which is unique, and thus $w_\lambda=w_\mu$, $u''u_\mu=v_\mu v'=h''h_\mu=1$. Restoring $u_\lambda$, $h_\lambda$ and $v_\lambda$ from $u''$, $h''$ and $v'$, one sees that $g=uhwv$ is the Bruhat decomposition in ${}^\sigma G(\Phi, F, \tau)$.
\end{proof}

\section{Suzuki group, the case $\Phi=\rC_2$.}
It is easy to check that
\[ x_\gamma(\xi)_\mu =
\begin{cases}
x_{\sigma(\gamma)}(\xi), & \text{if $\gamma$ is long,} \\
x_{\sigma(\gamma)}(\xi^2), & \text{if $\gamma$ is short.}
\end{cases} \]
One can now describe the structure of $U$ by comparing for $u=\prod_{\gamma\in\rC_2^+}x_\gamma(\xi_\gamma)\in U$ the action of $\tau u_\lambda$ and $u_\mu\tau$ on $e_\nu$, where $\nu$ is the lowest weight, and solving for $\xi_\gamma$.  Namely,
\begin{gather*}
{}^\sigma U(\rC_2, R, \tau) \ni u = x_+(a, b) = x_{\alpha}(a) x_{\beta}(a^\tau) x_{\alpha+\beta}(b) x_{2\alpha+\beta}(a^{\tau+2}+b^\tau), \\
x_+(a, b)_\lambda =
\begin{pmatrix}
1 & a & b+a^{\tau+1} & ab+b^\tau+a^{\tau+2} \\
& 1 & a^\tau & b \\
&& 1 & a \\
&&& 1
\end{pmatrix}.
\end{gather*}
Recall that an element of the torus $H(\rC_2, R)$ is of the form $h=h_\alpha(\varepsilon_1)h_\beta(\varepsilon_2)$, where
\[ h_\gamma(\varepsilon) = w_\gamma(\varepsilon)w_{\gamma}(-1),\quad w_\gamma(\varepsilon)=x_\gamma(\varepsilon)x_{-\gamma}(-\varepsilon^{-1})x_\gamma(\varepsilon). \]
Thus $h_\lambda=\operatorname{diag}(\varepsilon_1,\varepsilon_2/\varepsilon_1,\varepsilon_1/\varepsilon_2,1/\varepsilon_1)$ and $h_\mu=\operatorname{diag}(\varepsilon_2,\varepsilon_1^2/\varepsilon_2,\varepsilon_2/\varepsilon_1^2,1/\varepsilon_2)$, and so $h\in H$ if $\varepsilon_2=\varepsilon_1^\tau$ and $h$ is of the form $h(\varepsilon)_\lambda=\operatorname{diag}(\varepsilon,\varepsilon^{\tau-1},\varepsilon^{1-\tau},\varepsilon^{-1})$.

Note the following relations:
\begin{align*}
& x_+(a,b)x_+(c,d) = x_+(a+c,b+d+a^\tau c), \\
& {}^{h(\varepsilon)}x_+(a,b) = x_+(\varepsilon^{2-\tau}a, \varepsilon^\tau b), \\
& [h(\varepsilon), x_+(0,b)] = x_+(0,b+\varepsilon^\tau b), \\
& [h(\varepsilon), x_+(a,0)] = x_+(a+\varepsilon^{2-\tau}, a^{\tau+1}(\varepsilon^{2\tau-2}+1)).
\end{align*}
The torus normalizer $N$ equals $H\cup\widehat{w_0}H$, where $\widehat{w_0}=\operatorname{antidiag}(1,1,1,1)$. Note that
\[ \widehat{w_0} h(\varepsilon) = x_-(0,\varepsilon)^{x_+(\varepsilon^{1-\tau}, 0)}, \]
and so over a field $F$ it follows from Bruhat decomposition that ${}^\sigma G(\rC_2, F, \tau) = \langle U, U^- \rangle$. Here 
\[ U^-={}^{\widehat{w_0}}U=\{x_-(a,b)\mid a,b\in R \}, \quad x_-(a,b)={}^{\widehat{w_0}}x_+(a,b). \]
If $F\neq\mathbb{F}_2$, the Suzuki group ${}^\sigma G(\rC_2, F, \tau)$ is perfect. Indeed, for $\varepsilon\in F\setminus\{0, 1\}$ one has
\begin{align*}
& [h(\varepsilon), x_+(0, b/(\varepsilon^\tau+1))] = x_+(0, b), \\
& [h(\varepsilon), x_+(a/(\varepsilon^{2-\tau}+1), 0)] = x_+(a, a^{1+\tau}/(\varepsilon^{2-\tau}+1)).
\end{align*}

\section{Small Ree group, the case $\Phi=\rG_2$.}
In this case one has to fix the signs of the structure constants appropriately in order to obtain a nice description for ${}^\sigma U(\rG_2, R, \tau)$. We construct the Lie algebra and Chevalley group of type $\rG_2$ in their $7$-dimensional representation as simultaneously preserving certain symmetric bilinear and alternating trilinear forms, defined as follows.

Fix a basis $e_i$, $i=1,2,3,0,-3,-2,-1$ of $V$, then let $B$ be a bilinear form with Gram matrix $B_e=\operatorname{antidiag}(1,1,1,2,1,1,1)$. Define $T$ to be the unique alternating trilinear form such that $T(e_i, e_j, e_k) = 1$ if $(i,j,k)$ is one of $(0,1,-1)$, $(0,-2,2)$, $(0,-3,3)$, $(1,-2,-3)$ or $(-1,3,2)$ and $0$ if $(i,j,k)$ is not the permutation of one of the triples above. Note that this description differs from the classical and more symmetric formula for the Dickson form \cite{AschbG2Trilinear}, \cite[section 5]{AschbMultilinear}, but suits better for our purposes.

Now we define the $\rG_2$ Lie algebra and Chevalley group as
\begin{align*}
& \mathfrak{g}(\rG_2) = \left\{ g\in\mathfrak{gl}_7\ \middle|\ \forall u,v,w\in V \quad \begin{aligned}
& B(gu,v)+B(u,gv)=0,\\
& T(gu,v,w)+T(u,gv,w)+T(u,v,gw)=0
\end{aligned} \right\}, \\
& G(\rG_2) = \left\{ g\in GL_7\ \middle|\ \forall u,v,w\in V \quad
\begin{aligned}
& B(gu,gv) = B(u,v), \\
& T(gu,gv,gw) = T(u,v,w)
\end{aligned} \right\}.
\end{align*}

Let $\{ \alpha, \beta, \alpha+\beta, 2\alpha+\beta, 3\alpha+\beta, 3\alpha+2\beta \}$ be the set of positive $\rG_2$ roots. We fix a Chevalley basis in $\mathfrak{g}(\rG_2)$ as follows:
\begin{align*}
&\mathsf{e}_\alpha = e_{12}-2e_{3,0}+e_{0,-3}-e_{-2,-1}, \\
&\mathsf{e}_\beta = e_{23}-e_{-3,-2}, \\
&\mathsf{e}_{\alpha+\beta} = e_{13}+2e_{2,0}-e_{0,-2}-e_{-3,-1}, \\
&\mathsf{e}_{2\alpha+\beta} = 2e_{1,0}-e_{2,-3}+e_{3,-2}-e_{0,-1}, \\
&\mathsf{e}_{3\alpha+\beta} = -e_{1,-3}+e_{3,-1}, \\
&\mathsf{e}_{3\alpha+2\beta} = -e_{1,-2}+e_{2,-1}, \\
&\mathsf{e}_{\gamma} = -P\mathsf{e}_{-\gamma}P, \quad \gamma\in\Phi^-, \\
&\mathsf{h}_\alpha = [\mathsf{e}_\alpha,\mathsf{e}_{-\alpha}] = \operatorname{diag}(1,-1,2,0,-2,1,-1), \\
&\mathsf{h}_\beta = [\mathsf{e}_\beta,\mathsf{e}_{-\beta}] = \operatorname{diag}(0,1,-1,0,1,-1,0).
\end{align*}
Here $P=\operatorname{antidiag}(1,\ldots,1)$ is the peridentity matrix. Note that the choice of signs is not compatible with \cite[Lemma~12.4.2]{CarterLie}. We define the elementary generators as
\[ x_\gamma(\xi) = \exp(\xi\mathsf{e}_\gamma),\quad \gamma\in\Phi. \]
A straightforward computation shows that
\[ x_\gamma(\xi)_\mu = x_{\sigma(\gamma)}\left((-1)^{1+\operatorname{ht}(\gamma)}\cdot\xi^{e(\gamma)}\right)_{\!\lambda}, \quad \text{where} \quad e(\gamma) =
\begin{cases}
1, & \text{if $\gamma$ is long,} \\
3, & \text{if $\gamma$ is short.}
\end{cases} \]

An element of $U(\rG_2, R)$ lies in ${}^2 G(\rG_2, R, \tau)$ if it is of the form
\begin{align*}
x_+(a,b,c) & =x_1(a)x_2(b)x_3(c),\quad \text{where}\ a,b,c\in R,\ \text{and} \\
x_1(a) & = x_\alpha(a)x_\beta(a^\tau)x_{\alpha+\beta}(-a^{\tau+1})x_{2\alpha+\beta}(a^{\tau+2}), \\
x_2(b) & = x_{\alpha+\beta}(b)x_{3\alpha+\beta}(-b^\tau), \\
x_3(c) & = x_{2\alpha+\beta}(c)x_{3\alpha+2\beta}(c^\tau).
\end{align*}
The elements of the torus in $G(\rG_2)$ are of the form $h=h_\alpha(\varepsilon_1)h_\beta(\varepsilon_2)$, where
\begin{align*}
& h_\alpha(\varepsilon)_\lambda = \operatorname{diag}(\varepsilon, 1/\varepsilon, \varepsilon^2, 1, 1/\varepsilon^2, \varepsilon, 1/\varepsilon), &&
h_\beta(\varepsilon)_\lambda = \operatorname{diag}(1, \varepsilon, 1/\varepsilon, 1, \varepsilon, 1/\varepsilon, 1), \\
& h_\alpha(\varepsilon)_\mu = \operatorname{diag}(1, \varepsilon^3, 1/\varepsilon^3, 1, \varepsilon^3, 1/\varepsilon^3, 1), &&
h_\beta(\varepsilon)_\mu = \operatorname{diag}(\varepsilon, 1/\varepsilon, \varepsilon^2, 1, 1/\varepsilon^2, \varepsilon, 1/\varepsilon).
\end{align*}
From this one easily deduces that $h\in H$ if $\varepsilon_2=\varepsilon_1^\tau$ and $h$ is of the form
\[ h(\varepsilon)_\lambda=\operatorname{diag}(\varepsilon, \varepsilon^{\tau-1}, \varepsilon^{2-\tau}, 1, \varepsilon^{\tau-2}, \varepsilon^{1-\tau}, \varepsilon^{-1}). \]
The relations between these elements are
\begin{align*}
& \begin{multlined}
x_+(a_1,b_1,c_1)x_+(a_2,b_2,c_2) = \\ \qquad = x_+(a_1+a_2, b_1+b_2+a_1a_2^\tau, c_1+c_2+b_1a_2+a_1a_2^{\tau+1}-a_1^2a_2^\tau),
\end{multlined} \\
& x_+(a,b,c)^{-1} = x_+(-a,-b+a^{\tau+1},-c+ab+a^{\tau+2}), \\
& {}^{h(\varepsilon)} x_+(a,b,c) = x_+(\varepsilon^{2-\tau}a, \varepsilon^{\tau-1}b, \varepsilon c), \\
& [h(\varepsilon), x_+(0,0,c)] = x_+(0,0,c(\varepsilon-1)), \\
& [h(\varepsilon), x_+(0,b,0)] = x_+(0,b(\varepsilon^{\tau-1}-1),0), \\
& [h(\varepsilon), x_+(a,0,0)] = x_+(a(\varepsilon^{2-\tau}-1), a^{\tau+1}(1-\varepsilon^{2-\tau}), a^{\tau+2}(\varepsilon^{2-\tau}-1)^2).
\end{align*}
The torus normalizer $N$ equals $H\cup\widehat{w_0}H$, where $\widehat{w_0}=\operatorname{antidiag}(-1,\ldots,-1)$. We also define
\[ U^-={}^{\widehat{w_0}}U=\{x_-(a,b,c)\mid a,b,c\in R \}, \quad x_-(a,b,c)={}^{\widehat{w_0}}x_+(a,b,c). \]

It is not known whether ${}^\sigma G(\rG_2, F, \tau)$ is perfect when $F$ is an infinite field, but it is always quasi-perfect, with the sole exception $F=\mathbb{F}_3$. Denote by $E$ the elementary subgroup of $G={}^\sigma G(\rG_2, F, \tau)$, that is, $E=\langle U,U^- \rangle$.
\begin{lemma}
If $F\neq\mathbb{F}_3$, then $[G,G]=E$ and is perfect.
\end{lemma}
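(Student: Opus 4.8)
The plan is to prove the two assertions separately: first that $E$ is perfect, i.e. $E=[E,E]$, and then that $[G,G]=E$. The second is the cheaper one and I would dispose of it first. The conjugation formula ${}^{h(\varepsilon)}x_+(a,b,c)=x_+(\varepsilon^{2-\tau}a,\varepsilon^{\tau-1}b,\varepsilon c)$ (and its $\widehat{w_0}$-conjugate for $U^-$) shows that $H$ normalizes $U$ and $U^-$, while $\widehat{w_0}$ interchanges them as $\widehat{w_0}^2=1$; since $G=\langle U,H,\widehat{w_0}\rangle$ by Bruhat decomposition, $E=\langle U,U^-\rangle$ is normal in $G$. Moreover $\widehat{w_0}\in E$, realized as a product of elements of $U$ and $U^-$ (as in the $\rC_2$ case, where $\widehat{w_0}=x_-(0,1)^{x_+(1,0)}$). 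Reducing $G=UNU$ with $N=H\cup\widehat{w_0}H$ modulo the normal subgroup $E$ then gives $G=EH$, so $G/E\cong H/(H\cap E)$ is abelian and $[G,G]\subseteq E$. Combined with $E=[E,E]\subseteq[G,G]$, which holds once perfectness is established, this yields $[G,G]=E$.

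For perfectness it suffices to show $U\subseteq[E,E]$, the inclusion $U^-\subseteq[E,E]$ then following by conjugation with $\widehat{w_0}$. The engine is the three torus-commutator identities, applied along the central filtration $U\supseteq\{x_+(0,b,c)\}\supseteq\{x_+(0,0,c)\}$ from the bottom up. First, $[h(\varepsilon),x_+(0,0,c)]=x_+(0,0,c(\varepsilon-1))$ produces the whole innermost layer once $h(\varepsilon)\in E$ for some $\varepsilon\neq1$. Next, $[h(\varepsilon),x_+(0,b,0)]=x_+(0,b(\varepsilon^{\tau-1}-1),0)$ produces the middle layer provided $h(\varepsilon)\in E$ for some $\varepsilon$ with $\varepsilon^{\tau-1}\neq1$. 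Finally the first coordinate of $[h(\varepsilon),x_+(a,0,0)]$ is $a(\varepsilon^{2-\tau}-1)$, so for $\varepsilon^{2-\tau}\neq1$ these elements, corrected by the two lower layers (already in the group $[E,E]$), exhaust $U$. The excluded field enters exactly at the middle layer: over $F=\mathbb{F}_3$ one has $\tau=\operatorname{id}$, hence $\varepsilon^{\tau-1}=1$ for all $\varepsilon$, so $\{x_+(0,b,0)\}\cong\mathbb{Z}/3$ never arises as a commutator — which is precisely the index-$3$ failure of perfectness of ${}^2\rG_2(3)$.

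The main obstacle is the standing hypothesis of the previous paragraph: that $E$ actually contains torus elements $h(\varepsilon)$ for values of $\varepsilon$ meeting $\varepsilon\neq1$, $\varepsilon^{\tau-1}\neq1$ and $\varepsilon^{2-\tau}\neq1$. In the $\rC_2$ case the identity $\widehat{w_0}h(\varepsilon)=x_-(0,\varepsilon)^{x_+(\varepsilon^{1-\tau},0)}$ puts all of $H$ into $E$; over an imperfect field one cannot hope for $H\subseteq E$ (this is why perfectness of the full group is open), so here one must instead exhibit an explicit $\rG_2$ analogue — a twisted Weyl element $n(\varepsilon)$, a product of $x_+$'s and $x_-$'s, with $h(\varepsilon)=n(\varepsilon)n(1)^{-1}\in E$ — and read off from the uniqueness of Bruhat decomposition in ${}^\sigma G(\rG_2,F,\tau)$ which $\varepsilon$ are so obtained. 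The remaining task is then purely arithmetic: to check that whenever $F\neq\mathbb{F}_3$ the resulting set of $\varepsilon$ is large enough to satisfy all three nondegeneracy conditions at once. This production of torus elements, rather than the commutator bookkeeping, is where the real work lies.
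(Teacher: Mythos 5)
Your skeleton coincides with the paper's: torus commutators along the filtration $U\supseteq\{x_+(0,b,c)\}\supseteq\{x_+(0,0,c)\}$, normality of $E$ from ${}^N U\leqslant E$ plus Bruhat decomposition, and your shortcut $[G,G]\subseteq E$ via $G=EH$ with $H$ abelian is correct and even a little cleaner than the paper's case analysis of commutators $[f,g]$, $f,g\in U\cup N$ (the paper instead computes $[N,N]=H^{(2)}$ and shows $H^{(2)}\leqslant E$ separately). But the proposal has a genuine gap, at exactly the point you flag yourself: every substantive conclusion --- $\widehat{w_0}\in E$ (on which your $[G,G]\subseteq E$ rests), $h(\varepsilon)\in E$ for suitable $\varepsilon$ (on which perfectness of $E$, and hence $E\subseteq[G,G]$, rests) --- depends on twisted Weyl elements whose existence in $E$ you assert by analogy with $\rC_2$ but never establish. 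Appealing to ``uniqueness of Bruhat decomposition'' to read off which $\varepsilon$ occur is not a method: uniqueness tells you that a product of unipotents landing in an $N$-coset has a well-defined torus part, not which torus parts are realized. The paper fills this hole with two explicit identities,
\[ x_+(0,1/\eta,0)\,x_-(0,\eta,0)\,x_+(0,1/\eta,0) = \widehat{w_0}\, h(\eta^{\tau+1}), \]
\[ x_+(-\eta^{-1},\eta^{-1-\tau},\eta^{-2-\tau})\,x_-(\eta,0,0)\,x_+(\eta^{-1},-\eta^{-1-\tau},\eta^{-2-\tau}) = \widehat{w_0}\, h(-\eta^{4+2\tau}), \]
which for $\eta=\varepsilon^{\tau-1}$ and $\eta=\varepsilon^{2-\tau}$ yield $\widehat{w_0}h(\varepsilon^2)\in E$ and $\widehat{w_0}h(-\varepsilon^2)\in E$ (so $h(\varepsilon^2), h(-1)\in E$, and $\eta=1$ gives $\widehat{w_0}\in E$). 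Without some such computation the proposal is a correct plan, not a proof.

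On the ``purely arithmetic'' closing step you defer: it does go through, and more simply than your three simultaneous conditions suggest, because they collapse. Since $\varepsilon^{(2-\tau)(2+\tau)}=\varepsilon^{4-\tau^2}=\varepsilon^4/\varepsilon^3=\varepsilon$, the condition $\varepsilon^{2-\tau}=1$ already forces $\varepsilon=1$; and $\varepsilon^{\tau-1}=1$ forces $\varepsilon^3=\varepsilon^{\tau^2}=\varepsilon$, i.e.\ $\varepsilon\in\mathbb{F}_3$. So everything reduces to finding $h(\varepsilon)\in E$ with $\varepsilon\notin\mathbb{F}_3$, plus $h(-1)\in E$ for the outer layers. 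Given the identities above, the realized torus values include all $\pm\varepsilon^2$, and if $\varepsilon^2\in\mathbb{F}_3$ for every $\varepsilon\in F^*$ then $F^*$ consists of roots of $x^4-1$, forcing $F=\mathbb{F}_3$; so $F\neq\mathbb{F}_3$ suffices, matching the paper. In short: right strategy, correct bookkeeping, but the decisive production of Weyl and torus elements inside $E$ --- which you yourself identify as ``where the real work lies'' --- is left undone.
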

\begin{proof}
Since $G=\langle U, N \rangle$, one has $[G,G]= \langle [f,g] \mid f,g\in U\cup N \rangle^G$. Using the relations listed above, it is easy to show that $E\leqslant[G,G]$. Namely,
\begin{align*}
& x_+(0,0,c) = [h(-1), x_+(0,0,c)], \\
& x_+(0,b,0) = [h(\varepsilon), x_+(0,b/(\varepsilon^{\tau-1}-1),0)] \quad \text{for } \varepsilon\notin\mathbb{F}_3, \\
& x_+(a,0,0) = [h(-1), x_+(a,0,0)]\cdot x_+(0,*,*).
\end{align*}
On the other hand, all the commutators $[f,g]$ with $f,g\in U\cup N$ lie in $E$. The inclusion $[N,U]\leqslant E$ follows from the expression for ${}^{h(\varepsilon)}x_+(a,b,c)$ and the definition of $U^-$. It only remains to consider commutators of the form $[N,N]$. But $H$ is abelian, while
\[ [h(\varepsilon),\widehat{w_0}h(\eta)] = h(\varepsilon^2) \quad \text{and} \quad [\widehat{w_0}h(\varepsilon),\widehat{w_0}h(\eta)]=h(\eta^2/\varepsilon^2), \]
so $[N, N] = \{ h(\varepsilon^2) \mid \varepsilon\in R^* \} = H^{(2)}$. The coset $\widehat{w_0}H^{(2)}$ is contained in $E$ because
\[ x_+(0,1/\eta,0)x_-(0,\eta,0)x_+(0,1/\eta,0) = \widehat{w_0} h(\eta^{\tau+1}), \]
and substituting $\eta=\varepsilon^{\tau-1}$ gives $\eta^{\tau+1}=\varepsilon^2$. Now $H^{(2)}=\left( \widehat{w_0}H^{(2)}\right)^2$.

It follows from Bruhat decomposition and the inclusion ${}^NU\leqslant E$ that $E$ is a normal subgroup of $G$. Since it contains the normal generators of $[G,G]$, one concludes that $E\geqslant [G,G]$.

To show that $E$ is perfect, note that in the commutator expressions for $x_+(a,b,c)$ all factors can be taken from $E$. Indeed,
\[ x_+(-\eta^{-1}, \eta^{-1-\tau}, \eta^{-2-\tau}) x_-(\eta,0,0) x_+(\eta^{-1},-\eta^{-1-\tau},\eta^{-2-\tau}) = \widehat{w_0} h(-\eta^{4+2\tau}) \]
and substituting $\eta=\varepsilon^{2-\tau}$ gives $-\eta^{4+2\tau}=-\varepsilon^2$, in particular, $h(-1)\in E$.
\end{proof}
\begin{remark*}
If the field $F$ is finite, $G=E$.
\end{remark*}
\begin{proof}
If $F$ is finite and $\operatorname{char}(F)=3$, then $F^*=\{ \pm\varepsilon^2 \mid \varepsilon\in F^* \}$, so $H\leqslant E$.
\end{proof}

\section{Simplicity}
\begin{lemma}
If $F\neq\mathbb{F}_p$ is a field, then the commutator subgroup $G={}^\sigma\!G(\Phi, F, \tau)'$ is simple.
\end{lemma}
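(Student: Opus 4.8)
The plan is to apply the standard simplicity theorem for groups carrying a Tits system (Tits's theorem; see e.g.\ \cite{CarterLie} or Bourbaki). Throughout write $\widetilde G = {}^\sigma G(\Phi, F, \tau)$ and $G = \widetilde G'$. By the preceding lemmas, and their analogues for $\rF_4$ obtained by the same computation, $G$ coincides with the elementary subgroup $E = \langle U, U^-\rangle$ and is perfect. First I would record that the quadruple $(G, B\cap G, N\cap G, S^\sigma)$ is a Tits system: the Bruhat decomposition of \cref{lemma:Bruhat} supplies $G = \bigsqcup_{w} (B\cap G)\,w\,(B\cap G)$, and the remaining axioms follow from the conjugation relations listed above together with the structure of the \emph{twisted} Weyl group $W^\sigma = (N\cap G)/(H\cap G)$, the fixed subgroup of $\sigma$ acting on $W(\Phi)$. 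For $\Phi = \rC_2, \rG_2$ this group has order $2$, and for $\Phi = \rF_4$ it is dihedral of order $16$; in all three cases the associated Coxeter system $(W^\sigma, S^\sigma)$ is irreducible.

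Next I would verify the hypotheses of Tits's theorem for this system. The unipotent part $U$ is nilpotent, hence solvable, and satisfies $B\cap G = (H\cap G)\cdot U$ with $H\cap G = (B\cap G)\cap(N\cap G)$; moreover $U^- = {}^{\widehat{w_0}}U$ is a $G$-conjugate of $U$, so $G = E = \langle U, U^-\rangle$ is generated by the conjugates of $U$, i.e.\ $G = G^+$. Tits's theorem then asserts that every subgroup of $G$ normalized by $G$ either is contained in $Z := \bigcap_{g\in G} g(B\cap G)g^{-1}$ or contains $G^+ = G$. Since $G$ is perfect, this immediately yields that $G/Z$ is simple, so it only remains to prove $Z = 1$.

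To kill the centre I would argue as follows. As $Z$ lies in every conjugate of the Borel, it lies in $(B\cap G)\cap {}^{\widehat{w_0}}(B\cap G) = H\cap G$, the diagonal torus. For $z\in Z$ and $u\in U$ one has ${}^u z\in Z\subseteq H\cap G$ while ${}^u z\cdot z^{-1}\in U$; hence ${}^u z\cdot z^{-1}\in (H\cap G)\cap U = 1$, so $z$ centralizes $U$, and likewise $U^-$. Feeding this into the explicit conjugation formulas for ${}^{h(\varepsilon)}x_+(\cdots)$ forces the torus parameters of $z$ to satisfy $\varepsilon^\tau = 1$ and $\varepsilon^{2-\tau}=1$ in the $\rC_2$ case, and $\varepsilon = 1$ outright in the $\rG_2$ (and $\rF_4$) cases; since $\tau^2$ is the Frobenius and $F$ is a field, the first pair of equations also has only the solution $\varepsilon = 1$. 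Thus $z = 1$, so $Z$ is trivial and $G$ is simple.

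The main obstacle is the first paragraph: establishing the full set of Tits-system axioms for the twisted group — in particular identifying $W^\sigma$ together with its Coxeter generators and checking the inclusion $s\,(B\cap G)\,w\subseteq (B\cap G)sw(B\cap G)\cup (B\cap G)w(B\cap G)$ — which is immediate in the rank-one cases $\rC_2$ and $\rG_2$ but genuinely requires the rank-two combinatorics of $\widetilde G(\rF_4)$. The perfectness needed for Tits's theorem is already available, since we work with the commutator subgroup $G = E$, so no additional effort is required there.
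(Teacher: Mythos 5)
Your proposal is correct and follows essentially the same route as the paper: both invoke Tits's simplicity criterion for the $BN$-pair $\bigl(\widetilde{B}, N\cap G\bigr)$ (Carter, Theorem~11.1.1), verifying solvability of the Borel, perfectness of $G$, irreducibility of the twisted Weyl group, and triviality of the core. The only divergence is the core-freeness step: the paper uses uniqueness of the Bruhat decomposition to compute $B\cap{}^{\widehat{w_0}}B\cap{}^{v}B=1$ for $v\in U^-$, $v\neq1$, while you first trap the core in $H\cap G$, show it centralizes $U$, and then force $\varepsilon=1$ via the explicit torus-conjugation formulas --- an equally valid variant of the same argument.
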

\begin{proof}
Since $\widetilde{B}=(H\cap G)U$ and $N\cap G$ form a $BN$-pair for $G$, we will prove the simplicity by appealing to a general result of Tits (see~\cite[Theorem~11.1.1]{CarterLie}). Namely, it suffices to show that $\widetilde{B}$ is solvable and core-free, the ambient group $G$ is perfect and that the simple reflections cannot be divided into two commuting subsets. The solvability of $\widetilde{B}$ follows from the solvability of the group of upper triangular matrices, the perfectness of $G$ has been proved above, and the Weyl group property is easy to check. It remains to show that the intersection of all conjugates of $\widetilde{B}$ is trivial. Note that $\widehat{w_0} B\widehat{w_0}^{-1}\cap B = H$. Now consider ${}^{v}B$ for some $v\in U^-$, $v\neq1$. Its elements are of the form $vh(\varepsilon)uv^{-1}$ for some $t\in F^*$ and $u\in U$. If this element lies in $H$, then $vh(\varepsilon)u=h(\eta)v=v'h(\eta)$ for some $\eta\in F^*$ and $v'\in U^-$. By the uniqueness of the Bruhat decomposition $u=1$ and $\varepsilon=\eta$. But $v\neq v'$ unless $\varepsilon=1$, and so $B\cap{}^{\widehat{w_0}}B\cap{}^v B=1$.
\end{proof}

\section{Polynomial mappings between $\rB_n$ and $\rC_n$}
As an application of the same idea, we explicitly construct a pair of polynomial maps between the groups of type $\rC_n$ and $\rB_n$ in characteristic $2$ that compose to the Frobenius endomorphism. The existence of such maps allowed Nuzhin and Stepanov \cite{NuzSteBnCnSubring} to carry the result of Bak and Stepanov about the subring subgroups of symplectic groups \cite{BakSteSymplecticSubring} to the subgroups of the Chevalley group $G(\rB_n,A)$ that contain $E(\rB_n,R)$, where $R$ is a subring of a commutative ring $A$.

The map $\rho\colon G(\rB_n)\to G(\rC_n)$ is constructed by restriction of the natural action of $G(\rB_n)$ on $(\rB_n, \varpi_1)=\langle e_1,\ldots,e_n,e_0,e_{-n},\ldots,e_{-1}\rangle$ onto $\langle e_1,\ldots,e_n,e_{-n},\ldots,e_{-1}\rangle$ (here $\langle e_0\rangle$ is the zero-weight subspace), see the end of the proof of \cite[Theorem~11.3.2(ii)]{CarterLie}. This restriction results in mapping
\begin{align*}
& x_\alpha(t)_{(\rB_n,\varpi_1)} \longmapsto x_{\alpha^\vee}(t)_{(\rC_n,\varpi_1)},\quad \text{if $\alpha\in\rB_n$ is long,} \\
& x_\alpha(t)_{(\rB_n,\varpi_1)} \longmapsto x_{\alpha^\vee}(t^2)_{(\rC_n,\varpi_1)},\quad \text{if $\alpha\in\rB_n$ is short.}
\end{align*}

The map $\theta\colon G(\rC_n)\to G(\rB_n)$ is constructed in a similar manner, but by considering the representations $(\rC_n,\varpi_n)$ and $(\rB_n,\varpi_n)$.

The fundamental representation $(\rC_n, \varpi_n)$ is a subrepresentation of $\wedge^n V$, where $V$ is the natural $2n$-dimensional module for $\operatorname{Sp}_{2n}$. We fix the basis $e_1,\ldots,e_n,e_{-n},\ldots,e_{-1}$ of $V$, such that the elementary generators act as
\begin{align*}
& T_{ij}(\xi) = e+\xi e_{ij}\pm\xi e_{-j,-i}, \quad i\neq\pm j,\\
& T_{i,-i}(\xi) = e+\xi e_{i,-i}.
\end{align*}
Here $\xi\in R$, $i,j\in\{1,\ldots,-1\}$. Denote
\[ e_A = \bigwedge_{\mathclap{a\in A}} e_a \quad \text{for } A\subseteq\{1,\ldots,-1\}. \]
Here we do not care about the order of factors for we are only interested in the case of characteristic $2$. The vectors $e_A$ with $|A|=n$ form a basis of $\wedge^n V$. For $A\subseteq\{1,\ldots,-1\}$ denote $A^{ij} = A \cap \{ \pm i, \pm j \}$ and $S(A)=A\cap-A$. Define a linear operator
\[ X_+\colon \wedge^n V \to \wedge^{n-2} V\quad \text{by} \quad X_+ e_A = \sum_{\mathclap{\qquad a\in S(A),\ a>0}} e_{A\setminus\{\pm a\}}.\]
Then $V(\varpi_n)=\ker X_+$ (see \cite[Ch.~VIII, \S13.3.IV]{BourLie79}). Now let us express the action of the elementary generators on $\wedge^n V$. 
\begin{align*}
& T_{i,-i}(\xi) e_A = \bigwedge_{\mathclap{\quad a\in A\setminus\{-i\}}} e_a \wedge
(\underbrace{e_{-i}\pm\xi e_i}_{\text{if $-i\in A$}}) =
\begin{cases}
e_A \pm \xi e_{A\setminus\{-i\}\cup\{i\}}, & \text{if $-i\in A$ and $i\notin A$}, \\
e_A, & \text{if $i\in A$ or $-i\notin A$}.
\end{cases}\\
& T_{ij}(\xi) e_A = \bigwedge_{\mathclap{\quad a\in A\setminus\{-i,j\}}} e_a \wedge (\underbrace{e_j\pm\xi e_i}_{\text{if $j\in A$}}) \wedge (\underbrace{e_{-i}\pm\xi e_{-j}}_{\text{if $-i\in A$}}) = \\
& \hphantom{T_{ij}(\xi) e_A} =
\begin{dcases}
e_A \pm \xi e_{A\setminus\{j\}\cup\{i\}}, & \text{if } A^{ij} \in \big\{\{j\},\{\pm j\},\{-i,\pm j\} \big\}, \\
e_A \pm \xi e_{A\setminus\{-i\}\cup\{-j\}}, & \text{if } A^{ij} \in \big\{ \{-i\},\{\pm i\},\{\pm i,j\} \big\}, \\
\begin{multlined}[b][12em]
e_A \pm \xi e_{A\setminus\{j\}\cup\{i\}} \pm \\
\pm \xi e_{A\setminus\{-i\}\cup\{-j\}} \pm \\
\pm \xi^2 e_{A\setminus\{-i,j\}\cup\{i,-j\}},
\end{multlined}& \text{if } A^{ij} = \{-i,j\}, \\
e_A & \text{otherwise.}
\end{dcases}
\end{align*}

We will now show that $U = \langle e_A,\ S(A)\neq\varnothing \rangle \cap \ker X_+$ is invariant in characteristic $2$.

For an element $u=\sum_k \alpha_k e_{A_k}$ and $B\subset \{1,\ldots,-1\}$, $|B|=n-2$ denote
\[ Y(u,B) = \sum_k \alpha_k,\quad \text{where the sum is over such $k$ that $A_k=B\cup\{\pm a\}$ for some $a$.} \]

An element $u=\sum_i \alpha_i e_{A_i}$ lies in $\ker X_+$ if for every $B\subset\{1,\ldots,-1\}$, $|B|=n-2$ the sum $Y(u, B)$ vanishes. Consider $u=\sum_k \alpha_k e_{A_k}$, where $S(A_k)\neq\varnothing$ for every $k$. Then
\[ T_{i,-i}(\xi)u = u + \xi \sum_{\mathclap{\qquad\quad k\colon -i\in A_k,\ i\notin A_k}} \alpha_k e_{A_k\setminus\{-i\}\cup\{i\}} = u+\xi u', \]
and all the summands in $u'$ are of the form $\alpha e_A$ with $S(A)\neq\varnothing$. Hence $T_{i,i}(\xi)u\in U$.
\begin{align*}
T_{ij}(\xi)u & =
\begin{aligned}[t]
u & + \xi \sum_{\mathclap{\qquad\qquad\qquad k\colon A_k^{ij} = \{j\},\{\pm j\},\{-i,\pm j\},\{-i,j\}}} \alpha_k e_{A_k\setminus\{j\}\cup\{i\}} + \\
& + \xi \sum_{\mathclap{\qquad\qquad\qquad k\colon A_k^{ij} = \{-i\},\{\pm i\},\{\pm i,j\},\{-i,j\}}} \alpha_k e_{A_k\setminus\{-i\}\cup\{-j\}} + \\
& + \xi^2 \sum_{\mathclap{k\colon A_k^{ij}=\{-i,j\}}} \alpha_k e_{A_k\setminus\{-i,j\}\cup\{i,-j\}} \equiv
\end{aligned} \\
& \equiv
\xi \sum_{\hspace{-2em}\mathclap{k\colon\substack{\mathrlap{A_k^{ij} =\{\pm j\}}\\ \mathrlap{S(A_k\setminus\{j\})=\varnothing}}}} \alpha_k e_{A_k\setminus\{j\}\cup\{i\}} +
\xi \sum_{\hspace{-2em}\mathclap{k\colon\substack{\mathrlap{A_k^{ij} = \{\pm i\}}\\\mathrlap{S(A_k\setminus\{-i\})=\varnothing}}}} \alpha_k e_{A_k\setminus\{-i\}\cup\{-j\}}
\pmod{U}
\end{align*}
Since $T_{ij}(\xi)u\in\ker X_+$, to each summand of $u$ of the form $\alpha_k e_{A_k}$ with $A_k=B\cup\{\pm j\}$, where $S(B)=\varnothing$, there corresponds another summand $\alpha_m e_{A_m}$, $A_m=B\cup\{\pm i\}$ with the same $B$, otherwise $Y(u,B)\neq0$. But then $A_k\setminus\{j\}\cup\{i\}=B\cup\{i,-j\}=A_m\setminus\{-i\}\cup\{-j\}$, so the basis elements involved in each of the two sums above coincide. Now note that since $U$ is defined by linear equations on $e_A$ with coefficients in $\mathbb{F}_2$, one can find a basis of $U$ that consists of $\mathbb{F}_2$-linear combinations of $e_A$, thus one can assume that all $\alpha_k$ in the above sums are actually $1$, and so this sum becomes zero in characteristic $2$.

Consider the quotient space $\ker X_+/U$. This is an $\operatorname{Sp}_{2n}$-module of dimension $2^n$ with the basis $e_A+U$, $S(A)=\varnothing$. The action of the induced root element operators $\overline{T}$ is described as
\begin{align*}
& \overline{T_{i,-i}}(\xi)e_A =
\begin{cases}
e_A + \xi e_{A\setminus\{-i\}\cup\{i\}} + U, & \text{if } -i\in A, \\
e_A + U & \text{otherwise.}
\end{cases} \\
& \overline{T_{ij}}(\xi)e_A = 
\begin{cases}
e_A + \xi^2 e_{A\setminus\{-i,j\}\cup\{i,-j\}} + U, & \text{if } -i,j\in A, \\
e_A + U & \text{otherwise.}
\end{cases}
\end{align*}
These are the formulas for the action of the elementary root unipotents in the spin representation of $\rB_n$, where the long root unipotents act by squares.

To see that this mapping does indeed send an arbitrary element $g$ of $\Sp(2n, R)$ to an element of $\Spin(2n+1, R)$, one has to check that the operator $\tilde{g}=\overline{\wedge^ng}$, acting on $\ker X_+/U$, is an element of $\Spin(2n+1, R)$ in its $2^n$-dimensional spin representation. The action of $\tilde{g}$ is given by the formula
\[ \tilde{g} (e_A+U) = \bigwedge_{\mathclap{a\in A}} ge_a + U. \]
Denote $M=\langle e_1,\ldots,e_n\rangle$ (so that $V=H(M)$ in the natural way), then one can identify $\ker X_+/U$ and $\wedge M$ by $e_A+U\mapsto e_{A\cap\{1,\ldots,n\}}$ and use the canonical isomorphism $C^+(2n+1) = C(H(M)) \cong \End(\wedge M)$ (see~\cite[Theorem~2.4]{BassCliff}). The latter isomorphism maps $e_i\in M$ to $(y\mapsto e_i\wedge y)$, which is, in turn, identified with the mapping
\[ s_i\colon e_A+U \longmapsto
\begin{cases}
e_{A\setminus\{-i\}\cup\{i\}}+U, & \text{if } -i\in A, \\
U, & \text{if } i\in A
\end{cases} \]
from $\End(\ker X_+/U)$. An element $f\in M^*=\langle e_{-n},\ldots,e_{-1}\rangle$ is mapped to the unique (anti)derivation $d_f$ of degree $-1$ prolonging $f$, in particular,
\[ d_{e_{-i}} (e_B) = \sum_{c\in B} e_{B\setminus\{c\}} d_{e_{-i}}(e_c) =
\begin{cases}
e_{B\setminus\{i\}}, & \text{if } i\in B, \\
0, & \text{if } i\notin B.
\end{cases} \]
This operator acts on $\ker X_+/U$ as
\[ s_{-i}\colon e_A+U \longmapsto
\begin{cases}
e_{A\setminus\{i\}\cup\{-i\}}+U, & \text{if } i\in A, \\
U, & \text{if } -i\in A.
\end{cases} \]

By definition \cite[\S~3.1]{BassCliff}, the special Clifford group of a quadratic module $P$ is
\[ \SCliff(P)=\{u\in C^+(P)^* \mid \pi(u)P\leqslant P\}. \]
Here $\pi(u)a = \pm uau^{-1}$. Thus to show that $\tilde{g}\in\SCliff(2n+1)$, one has to check that conjugation by $\tilde{g}$ preserves the linear span $\langle s_1,\ldots,s_n,\operatorname{id},s_{-n},\ldots,s_{-1}\rangle\leqslant\End(\ker X_+/U)$. The $\Sp(2n,R)$-invariance of this subspace is checked by a straightforward computation. Namely, denote $q=\overline{T_{i,-i}}(\xi)$ and $r=\overline{T_{ij}}(\xi)$, then
\[ {}^q\!s_j =
\begin{cases}
s_{-i}+\xi\operatorname{id}+\xi^2s_i, & \text{if } j=-i, \\
s_j & \text{otherwise},
\end{cases}
\qquad
{}^r\!s_k =
\begin{cases}
s_{-i}+\xi^2s_{-j}, & \text{if } k=-i, \\
s_j+\xi^2s_i, & \text{if } k=j, \\
s_k & \text{otherwise}.
\end{cases} \]
The Spin group is defined \cite[\S~3.2]{BassCliff} as the kernel of the norm map $N\colon\SCliff(P)\to R^*$. Here $N(x)=x\overline{x}$, and $x\mapsto\overline{x}$ is the antiautomorphism of $C(P)$ that extends the identity map on $P$. On $C^+(2n+1)\cong\End(\ker X_+/U)$ it can be defined by $N(x)=Jx^tJ$, where $J_{AB}=\delta_{A,-B}$ (with respect to $e_A+U$, $S(A)=\varnothing$). Consider $K\in\End{\wedge^n V}$, defined (with respect to $e_A$) by $K_{AB}=\delta_{A,\overline{B}}$, $\overline{B}=\{1,\ldots,-1\}\setminus B$. Then if follows from the Laplace expansion in multiple rows that
\[ \wedge^n g \cdot K \cdot (\wedge^n g)^t \cdot K \equiv \det(g)\cdot \operatorname{id}_{\wedge^n V} \pmod{2}, \]
and hence $N(\tilde{g})=\tilde{g}J\tilde{g}^t\!J=\det(g)=1$, so $\theta\colon g\mapsto\tilde{g}$ maps $\Sp(2n,R)$ to $\Spin(2n+1,R)$.

\section{Tits mixed groups}
The maps $\rho$ and $\theta$, constructed in the previous section, allow one to give a simple definition of the Tits mixed groups over commutative rings. Usually one considers a pair of fields $E\leqslant F$ of characteristic $p$, the part of an infinite chain of fields
\[ \ldots \leqslant F^{p^2} \leqslant E^p \leqslant F^p \leqslant E \leqslant F \leqslant E^{1/p} \leqslant F^{1/p} \leqslant \ldots \]
The mixed group is then defined \cite[\S~10.3.2]{TitsBuildings} (see also \cite{NaertTwisting}) as
\[ G(\Phi, E, F) = \left\langle x_\alpha(\xi) \colon \alpha\in\Phi,\
\begin{aligned}
& \xi\in E \text{ if $\alpha$ is long,} \\
& \xi\in F \text{ if $\alpha$ is short}
\end{aligned} \right\rangle. \]
Over a ring this defines the elementary subgroup, but not the ambient group. Note that $\theta(G(\rC_n, F)) = G(\rB_n, F^2, F)$ and that $G(\rB_n, F^2, E) = G(\rB_n, E) \cap G(\rB_n, F^2, F)$. Now since $G(\rB_n, E, F) \cong G(\rC_n, F^2, E)$, also $G(\rB_n, F^2, E) \cong G(\rC_n, E^2, F^2) = \varphi(G(\rC_n, E, F))$. This definitions of mixed group are easily extended to the groups over rings.

For the mixed groups of types $\rG_2$ or $\rF_4$ everything is even easier. For example, in case $\Phi=\rG_2$ the reduction map $\vartheta\colon G(\rG_2)_3\to \End(V(\mu))$ sends an element of $G(\rG_2)_3=G_{\mathrm{ad}}(\rG_2)$ to an element of $G(\rG_2)_\mu$, since the preserved forms $B$ and $T$ on $V$ are simply the reductions modulo $3$ of the Killing form $\kappa(u,v)=\operatorname{tr}(\operatorname{ad}u\cdot\operatorname{ad}v)$ and of the unique $G_{\mathrm{ad}}(\rG_2)$-invariant (alternating) trilinear form $(u,v,w)\mapsto\kappa([u,v],w)$ on the adjoint module. Then one defines $G(\rG_2, F^3, E) = G(\rG_2, E) \cap \vartheta(G(\rG_2, F))$.

\bibliographystyle{amsalpha}
\bibliography{suzuki-ree}
\end{document}